\documentclass{amsart}

\title{The Smith-Toda complex $V(4)$ exists for $p \geq 11$}
\author{Vigleik Angeltveit}
\address{Mathematical Sciences Institute \\
Australian National University \\
Canberra, ACT 0200 \\
Australia}

\usepackage{amsxtra}
\usepackage{amsfonts}
\usepackage[latin1]{inputenc}
\usepackage{graphicx}
\usepackage{amsmath,amssymb,latexsym,amsthm,mathrsfs}
\usepackage[all]{xy}
\usepackage{verbatim}
\usepackage{hyperref}

\newtheorem{theorem}{Theorem}[section]
\newtheorem{thm}[theorem]{Theorem}
\newtheorem{lemma}[theorem]{Lemma}

\newtheorem{prop}[theorem]{Proposition}

\theoremstyle{definition}

\newtheorem{remark}[theorem]{Remark}
\newtheorem{example}[theorem]{Example}
\newtheorem{notation}[theorem]{Notation}

\numberwithin{equation}{section}

     \newcommand{\bF}{\mathbb{F}}          
         \newcommand{\bZ}{\mathbb{Z}}

\newcommand{\sma}{\wedge} 

\newcommand{\xto}{\xrightarrow}

\newcommand{\wt}{\widetilde}

\newcommand{\Ext}{\textnormal{Ext}}

\begin{document}

\begin{abstract}
We show that the Smith-Toda complex $V(4)$ exists for all primes $p \geq 11$. This improves on the recently announced result by OpenAI that $V(4)$ exists at the prime $1009$.
\end{abstract}

\maketitle

\section{Introduction}
OpenAI recently released a massive trove of mathematics papers. Two of those papers, \cite{OAI:V4} and \cite{OAI:Vn}, are about the existence of Smith-Toda complexes. The first one \cite{OAI:V4} proves that $V(4)$ exists at $p = 1009$, and the second one proves that for any $n$ there exists a prime $p$ for which $V(n)$ exists.

We thought both of these papers looked a bit suspicious. We have not found any mathematical errors, but both of them make some strange choices in what they prove and how they do it. The first result really should say that $V(4)$ exists for all $p \geq p_0$, and in any case $1009$ is suspiciously high. The second result should say that $V(n)$ exists for all sufficiently large $p$. Moreover, the second paper uses \emph{ultrafilters} throughout. We have not been able to find a good reason for this. We speculate that ultrafilters came from one of the references, and the AI model left them in even though the reason that particular reference uses ultrafilters is irrelevant to the problem at hand.

Here we choose to focus on the first of the two OpenAI papers, and we make the following improvement:

\begin{thm} \label{t:main}
For any prime $p \geq 7$ the element $v_4$ in bidegree $(0, 2p^4-2)$ of the Adams-Novikov spectral sequence converging to $\pi_* V(3)$ is a permanent cycle and represents an element $v_4 \in \pi_{2p^4-2} V(3)$.

For any prime $p \geq 11$ there exists a finite $p$-local spectrum $X$ and an isomorphism
\[
 BP_* X \cong BP_*/(p, v_1, v_2, v_3, v_4)
\]
of graded $BP_* BP$-comodules.
\end{thm}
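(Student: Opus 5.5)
Having a permanent cycle $v_4 \in \pi_{2p^4-2} V(3)$ is enough for the second statement. Given it, the cofiber $X$ of a self-map $\Sigma^{2p^4-2} V(3) \to V(3)$ realizing $v_4$ has $BP_*X \cong BP_*/(p,v_1,v_2,v_3,v_4)$ as comodules. That map exists if $V(3)$ is a ring spectrum, or at least admits a unital multiplication $V(3)\sma V(3)\to V(3)$. Then we compose $S^{2p^4-2}\sma V(3) \to V(3)\sma V(3) \to V(3)$ and use the short exact sequence on $BP_*$, since $v_4$ is a nonzerodivisor on $BP_*/I_4$. By Toda's classical results, $V(3)$ exists for $p\geq 7$ and has a multiplication (associative enough) once $p$ is large; I expect the condition $p\geq 11$ to come exactly from needing this multiplication on $V(3)$ rather than just its existence. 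So the plan has two parts. First, show $v_4$ survives the ANSS for $V(3)$ when $p\geq 7$. Second, use a pairing on $V(3)$, available for $p\geq 11$, to extend $v_4$ to a self-map and take its cofiber.

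For the permanent-cycle statement I would use a sparseness and degree argument. $V(3)$ is the iterated cofiber of $p, v_1, v_2, v_3$, so its ANSS $E_2$-term is $\Ext_{BP_*BP}(BP_*, BP_*/I_4)$. Differentials $d_r$ have bidegree $(r, r-1)$ and are nonzero only for $r \equiv 1 \pmod{2(p-1)}$. So a possible target of $d_r(v_4)$ lies in internal degree $t$ with $t-s = 2p^4-3$ and $s = r \geq 2p-1$. I would bound $\Ext^{s,t}(BP_*/I_4)$ from below via the change of rings to the height-$\geq 4$ part, using the filtration by $\Ext$ over $BP_*/I_4$ with the reduced Hopf algebroid. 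The key estimate is a vanishing line: $\Ext^{s,t}(BP_*, BP_*/I_4)=0$ for $t-s < s\cdot c$, for some slope $c$ of order $2(p^4-1)/\ldots$. This can be obtained by comparing with the cobar complex on $P_*$ with generators $t_i$, $i\geq 1$, whose degrees are at least $2(p-1)$. More precisely, I would use the May/Ravenel filtration in which the first generators $h_{i,j}$ and $b_{i,j}$ have stem at least $2p-3$ per unit of filtration, modulo $v_4$-torsion-free contributions.

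The main obstacle is making this vanishing estimate sharp enough at $p=7$. A naive slope-$(2p-3)$ line puts a potential target at filtration $s \approx (2p^4)/(2p-2)$, which is far too permissive. So instead I would work in the Cartan–Eilenberg/algebraic Novikov spectral sequence from $\Ext_{P_*}(\bF_p, \bF_p)\otimes \bF_p[v_4,v_5,\dots]$. I would show there are no classes in stem $2p^4-3$ of filtration $s\geq 2p-1$ with $s\equiv 1 \pmod{2(p-1)}$. First I would check the first possible differential, $d_{2p-1}$, by identifying the candidate targets: products of $h_{i,j}$, $b_{i,j}$ and $v_k$ monomials. I would then verify by a degree count mod $2(p-1)$ and mod $p$-adic digits of $t$ that none lands in that bidegree for $p\geq 7$. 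For the longer differentials, I would check that the filtration–stem region is empty outright.
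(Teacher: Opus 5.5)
Your reduction of the second statement to the first is exactly what the paper does: a unital multiplication on $V(3)$ for $p\geq 11$, the composite $S^{2p^4-2}\sma V(3)\to V(3)\sma V(3)\to V(3)$, and its cofiber. The gap is in the first statement. You plan to show, by sparseness and degree/digit counting, that the $E_2$-term (in this range $H^*(P)[v_4]$) vanishes in stem $2p^4-3$ and filtrations $s=kq+1$. That vanishing is false, so no vanishing line or $p$-adic digit count can finish the argument.

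Writing weights in base $p$, the May $E_1$-term has nine monomials in these bidegrees, for $k=1,2,p-1,p$. Seven of them die for May spectral sequence reasons, but two do not. The class $u=acybzB^{p-3}=h_{2,0}h_{3,0}h_{1,1}h_{2,1}h_{1,2}b_{2,0}^{p-3}$ in filtration $2p-1$ survives to a nonzero element of $E_2^{2p-1,*}$. In that stem, filtration $2p-1$ can only be hit from filtration $0$, i.e. by $v_4$. So $u$ is still present on $E_{2p-1}$ and is a genuine candidate for $d_{2p-1}(v_4)$. Likewise $aybBA^{N}$, with $N=p(p-1)-2$, in filtration $pq+1$ is not removed by the May spectral sequence.

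Ruling these two candidates out needs genuinely different input, and your proposal contains neither argument.

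\emph{The $d_{2p-1}$ candidate.} The paper uses the relation $h_{1,4}v_4=h_{1,0}v_4^p$ coming from $\eta_R(v_5)$. It applies $d_{2p-1}$ with the Leibniz rule for the $V(2)$-module structure, then projects with $\sigma$ to get $h_{1,4}\,\sigma(d_{2p-1}(v_4))=h_{1,0}\,\sigma(d_{2p-1}(v_4^p))$. The right side vanishes automatically for $p\geq 11$. At $p=7$ it vanishes because $H^{2p-1}(P)$ is zero in weight $u_5$. Finally, $h_{1,4}u$ is shown not to be a May boundary, using the zero-digit modified filtration against the one dangerous source $f_7$. Hence $d_{2p-1}(v_4)\doteq u$ is impossible.

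\emph{The $d_{pq+1}$ candidate.} The paper shows $aybBA^N$ is already dead before $E_{pq+1}$. The class $\theta=aybB$ detects some $\gamma\in\pi_*V(3)$ with $\alpha_1\gamma=0$. Ravenel's Toda bracket juggling writes $\beta_1^{p^2-2p+1}$ as $\alpha_1$ times a bracket, so $\beta_1^{p^2-2p+1}\gamma=0$. This forces $A^{p^2-2p+1}\theta$, and hence $aybBA^N$, to be hit by a shorter differential $d_{(p-2)q+1}$.

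Without these two arguments the proposal does not establish that $v_4$ is a permanent cycle.
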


The OpenAI paper has the identical conclusion to the second part of Theorem \ref{t:main} for $p = 1009$. In addition to improving their result to cover more primes, we significantly simplify their argument in a few places. Their Section 4 has been replaced by a reference to Toda \cite{To71}. Their Section 6 has been replaced by a much simpler May spectral sequence calculation. And their Section 7 has been replaced by a simple Toda bracket juggling trick.

We refer to a finite $p$-local spectrum $X$ with $BP_* X \cong BP_*/(p, v_1,\ldots, v_n)$ as a Smith-Toda complex and denote it by $V(n)$. By a ring spectrum we mean a spectrum with a unital multiplication; no associativity or commutativity is implied. From \cite[Theorems 1.1 and 4.4]{To71} we have the following: The Smith-Toda complex $V(0)$ is the mod $p$ Moore spectrum, which exists for all $p$ and is a ring spectrum for $p \geq 3$. Next, $V(1)$ exists for $p \geq 3$ and is a ring spectrum for $p \geq 5$. It is always a module spectrum over $V(0)$, where we interpret module spectrum in a similar way as a ring spectrum. The spectrum $V(2)$ exists for $p \geq 5$ and is a ring spectrum for $p \geq 7$. It is always a module spectrum over $V(1)$. Finally, $V(3)$ exists for $p \geq 7$ and is a ring spectrum for $p \geq 11$. It is always a module spectrum over $V(2)$.

\subsection{Proof strategy}
The proof strategy is as follows: In the $E_2$-term of the Adams-Novikov spectral sequence (ANSS) converging to $\pi_* V(3)$, there is an element which, if it survives, represents $v_4$. We prove that $v_4$ does in fact survive whenever $V(3)$ exists. Then we use the ring structure on $V(3)$, which only exists for $p \geq 11$, to construct a self map $v_4 : \Sigma^{2p^4-2} V(3) \to V(3)$ whose cofiber is $V(4)$ as the composite
\begin{equation} \label{eq:upgradetoselfmap}
 \Sigma^{2p^4-2} V(3) \xto{\cong} S^{2p^4-2} \sma V(3) \xto{v_4 \sma 1} V(3) \sma V(3) \to V(3).
\end{equation}

So far this is identical to the strategy in the first OpenAI Smith-Toda paper, except that we rely on Toda for the fact that $V(3)$ exists and has a ring structure rather than proving it ourselves.

We emphasise that the condition that $p \geq 11$ comes from needing a ring structure on $V(3)$. This is similar to the various classical cases: We always have $v_1 \in \pi_{2p-2} V(0)$, but we need $p \geq 3$ to upgrade this to a $v_1$-self map on $V(0)$. We have $v_2 \in \pi_{2p^2-2} V(1)$ whenever $V(1)$ exists but we need $p \geq 5$ to upgrade it to a $v_2$-self map on $V(1)$. And we have $v_3 \in \pi_{2p^3-2} V(2)$ whenever $V(2)$ exists but we need $p \geq 7$ to upgrade it to a $v_3$-self map on $V(2)$.

The May spectral sequence (MSS) is a tri-graded spectral sequence which converges to the $E_2$-term of the ANSS through a range of degrees. To show that the class representing $v_4$ survives the ANSS, we check that there are exactly $9$ classes in the $E_1$-term of the MSS which contribute to the right bidegree in the ANSS to make them potential targets of a differential on $v_4$. Out of these, exactly $7$ die in the MSS. Our argument for this is much more elementary than the one given in the OpenAI paper. This leaves exactly $2$ possible targets of a differential on $v_4$ in the ANSS, of length $(2p-1)$ and $p(2p-2)+1$.

The argument for why the first of these differentials vanishes (Theorem \ref{t:h14injective}) relies on a relation supplied by $v_5$, and is also given in the OpenAI paper. To show that the second one vanishes, we show that the potential target dies at an earlier page of the ANSS. This was the most complicated part of the OpenAI paper; we give a much shorter argument (Theorem \ref{t:nolongdiff}).

\subsection{AI assistance}
ChatGPT (OpenAI) was used extensively as an interactive research assistant during the development of this work, including for checking calculations, locating relevant literature, and exploring simplifications of the arguments in the OpenAI Smith-Toda papers. In particular, conversations with ChatGPT helped lead to the reformulation of the final Adams-Novikov obstruction in terms of Toda's classical bracket-juggling argument. The author independently checked the mathematical arguments and is solely responsible for the contents of the paper.

\section{Background}
We recall some of the conventions and results from \cite[Sections 2-3]{OAI:V4} and make some small improvements. With the exception of the zero digit modification to the May filtration this material is all standard and can be found, for example, in Ravenel's book \cite{Ra04}.

\begin{lemma}
Suppose $V(n-1)$ exists. The map
\[
 \sigma : BP_* V(n-1) = \bF_p[v_n, v_{n+1},\ldots] \to \bF_p
\]
induces a map from the $E_2$-term of the ANSS
\[
 E_2^{*,*} = \Ext_{BP_* BP}^{*,*}(BP_*, BP_* V(n-1))
\]
to
\[
 \Ext_{BP_* BP}^{*,*}(BP_*, \bF_p) \cong H^*(P)
\]
which is an isomorphism in internal degree less than $|v_n|$.

Here $P = \bF_p[t_1, t_2, \ldots]$ with coproduct $\Delta(t_i) = \sum_{\ell=0}^i t_\ell \otimes t_{i-\ell}^{p^\ell}$.
\end{lemma}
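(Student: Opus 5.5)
We will prove the lemma in two steps. First we identify $\Ext_{BP_*BP}(BP_*, \bF_p)$ with $H^*(P)$ by a change of rings. Then we compare $BP_*V(n-1)$ with $\bF_p$ in low internal degrees via the short exact sequence given by multiplication by $v_n$.

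For the first step, note that $\bF_p = BP_*/I_\infty$ with $I_\infty = (p, v_1, v_2, \ldots)$ is an invariant ideal. The Hopf algebroid $(BP_*, BP_*BP)$ therefore restricts along $BP_* \to \bF_p$ to the Hopf algebra
\[
 \bF_p \otimes_{BP_*} BP_*BP \otimes_{BP_*} \bF_p = \bF_p[t_1, t_2, \ldots].
\]
The standard change of rings theorem (Ravenel, Theorem A1.3.12) then gives $\Ext_{BP_*BP}(BP_*, \bF_p) \cong \Ext_P(\bF_p,\bF_p) = H^*(P)$. To get the coproduct, reduce the formula for $\Delta(t_i)$ in $BP_*BP$ modulo $I_\infty$. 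All terms involving the $v_j$, or coming from the formal group law correction $\sum^F$, vanish. What remains is $\Delta(t_i) = \sum_{\ell} t_\ell \otimes t_{i-\ell}^{p^\ell}$, using Quillen's formula. We check that $\sigma$ is a comodule map: $I_n' = (v_n, v_{n+1}, \ldots)$ is invariant in $BP_*/I_n$ because $\eta_R(v_k) \equiv v_k \bmod I_k$. So $\sigma$ induces the stated map on Ext, and this map factors as $\sigma_*$ followed by the change of rings isomorphism.

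For the second step, let $M = BP_*/I_n = \bF_p[v_n, v_{n+1}, \ldots]$ and let $K = \ker \sigma = (v_n, v_{n+1}, \ldots) M$. This is a subcomodule concentrated in internal degrees $\geq |v_n|$. The short exact sequence of comodules
\[
 0 \to K \to M \to \bF_p \to 0
\]
gives a long exact sequence in $\Ext^{s,t}$. It suffices to show that $\Ext^{s,t}(BP_*, K) = 0$ for all $t < |v_n|$ and all $s$. The long exact sequence then gives the isomorphism $\Ext^{s,t}(M) \cong \Ext^{s,t}(\bF_p)$ for $t < |v_n|$ and all $s$. The connecting map $\Ext^{s,t}(\bF_p) \to \Ext^{s+1,t}(K)$ preserves internal degree, so its target vanishes too.

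To prove the vanishing, use the cobar complex $C^s(K) = \overline{BP_*BP}^{\otimes s} \otimes_{BP_*} K$. The grading is nonnegative, $\overline{BP_*BP}$ is concentrated in positive degrees, and $K$ starts in degree $|v_n|$. Hence every element of $C^s(K)$ has internal degree $\geq |v_n|$, so the complex is zero in internal degree $t<|v_n|$.

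The main point needing care is bookkeeping of the grading conventions, so that "internal degree less than $|v_n|$" means the $t$ in $\Ext^{s,t}$ as used later in the paper. The hypothesis that $V(n-1)$ exists only serves to identify $BP_*V(n-1)$ with $M$ and the ANSS $E_2$-term with $\Ext(BP_*, M)$. The surjectivity of the map in degree $t = |v_n|$ is not needed and we will not claim it.
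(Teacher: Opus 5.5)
The paper gives no proof of this lemma; it is recalled as standard background with a pointer to Ravenel, so there is no argument of the paper's to compare with. Your proof is correct, and it is the standard argument:
\begin{itemize}
\item invariance of $I_\infty=(p,v_1,v_2,\ldots)$, via $\eta_R(v_k)\equiv v_k \bmod I_k$, makes $\sigma$ a comodule map;
\item the long exact sequence for $0\to K\to BP_*/I_n\to\bF_p\to 0$, together with the vanishing of the cobar complex of $K$ below internal degree $|v_n|$, then gives the isomorphism.
\end{itemize}

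Two small remarks.

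First, for the identification $\Ext_{BP_*BP}(BP_*,\bF_p)\cong H^*(P)$ you do not need a general change-of-rings theorem, whose hypotheses you would then have to check for $BP_*\to\bF_p$. Because $\eta_R(I_\infty)\subset I_\infty BP_*BP$, one has $\overline{BP_*BP}^{\otimes s}\otimes_{BP_*}\bF_p\cong\overline{P}^{\otimes s}$, compatibly with the differentials. So the complex computing the left side is literally the standard cobar complex of $P$, which is how the paper defines $H^*(P)$.

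Second, your connectivity argument gives slightly more than the lemma states. Since $\overline{BP_*BP}$ starts in degree $q$, $C^s(K)$ vanishes below internal degree $|v_n|+sq$. Hence the map is an isomorphism on $\Ext^{s,t}$ for $t<|v_n|+sq$. In particular, at $t=|v_n|$ the only discrepancy is $\Ext^{0,|v_n|}(K)=\bF_p\{v_n\}$, which is exactly the class $v_n$ the paper mentions right after the lemma.
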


Because $BP_* V(n-1)$ is concentrated in degrees divisible by $q = 2p - 2$, we can grade $E_2^{*,*}$ by cohomological degree and weight, with $v_i$ and $t_i$ in weight $u_i = \frac{|v_i|}{q} = \frac{p^i-1}{p-1}$ and weight $w$ corresponding to internal degree $qw$. Then $E_2^{s,w}$ contributes to $\pi_{qw-s} V(n-1)$. It follows that $d_r = 0$ unless $r = kq+1$, in which case $d_{kq+1} : E_{kq+1}^{s,w} \to E_{kq+1}^{s+kq+1, w+k}$.

By $H^*(P)$ we mean the cohomology of the standard cobar complex. This is bigraded by cohomological and internal degree, but we suppress the second grading from the notation.

The isomorphism breaks down in degree $|v_n|=2p^n-2$, where we have a class $v_n \in E_2^{0, u_n}$. Since $v_n$ is in filtration $0$ it cannot be a boundary, and the only way for it to die is if it supports a non-zero differential.

\begin{lemma} \label{l:v5rel}
Suppose $V(n-1)$ exists. In the $E_2$-term of the Adams-Novikov spectral sequence converging to $\pi_* V(n-1)$ we have $E_2^{*,*} \cong H^*(P)[v_n]$ in weight less than $u_{n+1}$, and in weight exactly $u_{n+1}$ there is a relation
\[
 h_{1,n} v_n = h_{1,0} v_n^p.
\]
\end{lemma}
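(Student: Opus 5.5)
The plan is to compute $\Ext_{BP_*BP}(BP_*, BP_*/I_n)$, with $I_n = (p, v_1, \ldots, v_{n-1})$, in low weights by a change of rings. I would compare it with $\Ext_{BP_*BP/I_n}(\bF_p, \bF_p[v_n, v_{n+1}, \ldots])$ and filter by powers of $v_n$. First I record the standard right unit formulas modulo $I_n$:
\[
 \eta_R(v_n) \equiv v_n, \qquad \eta_R(v_{n+1}) \equiv v_{n+1} + v_n t_1^{p^n} - v_n^p t_1 \pmod{I_n}.
\]
Both are valid in weights at most $u_{n+1}$, which we may take from Ravenel \cite{Ra04}. The first says $v_n$ is primitive, so $BP_*/I_n$ contains the subcomodule $\bF_p[v_n]$ with trivial coaction on $v_n$. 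The quotient of $BP_*/I_n$ by $\bF_p[v_n]$ begins in weight $u_{n+1}$, namely with $v_{n+1}$. Hence, in weight less than $u_{n+1}$, the cohomology of $BP_*/I_n$ agrees with that of $\bF_p[v_n]$.

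Next I identify the cohomology of $\bF_p[v_n]$ in weight less than $u_{n+1}$. Since $v_n$ is primitive, the cobar complex of $BP_*BP/I_n$ with coefficients in $\bF_p[v_n]$ splits as $\bigoplus_k v_n^k \cdot C(\bF_p)$. Its cohomology is $\Ext(BP_*, \bF_p)[v_n]$. By the previous Lemma this is $H^*(P)[v_n]$ once the relevant internal degrees are small. More carefully, I would run the same reduction as in that Lemma: pass from $BP_*BP/I_n$ acting on $\bF_p$ to $P$. This is legitimate because $\bF_p$ is a comodule on which the $v_i$ act trivially. This gives $E_2 \cong H^*(P)[v_n]$ in weight less than $u_{n+1}$.

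In weight exactly $u_{n+1}$, I use the short exact sequence of comodules $0 \to \bF_p[v_n] \to BP_*/I_n \to Q \to 0$, truncated to weights at most $u_{n+1}$. In that range $Q$ is $\bF_p\{v_{n+1}\}$, one class in weight $u_{n+1}$. The connecting homomorphism $\delta : \Ext^0(Q) \to \Ext^1(\bF_p[v_n])$ sends $v_{n+1}$ to the class of the cobar coboundary $\eta_R(v_{n+1}) - v_{n+1}$, which is $v_n t_1^{p^n} - v_n^p t_1$. Since $[t_1^{p^n}] = h_{1,n}$ and $[t_1] = h_{1,0}$, this class is $h_{1,n} v_n - h_{1,0} v_n^p$. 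The long exact sequence then shows that $E_2$ in weight $u_{n+1}$ is $H^*(P)[v_n]$ modulo the image of $\delta$, plus possibly the kernel. The key consequence is that the relation $h_{1,n} v_n = h_{1,0} v_n^p$ holds.

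The main obstacle is the bookkeeping at the boundary weight. I must check that the right unit formula really has no further terms mod $I_n$ in this weight, and that $t_1^{p^n}$ is an honest cobar representative of $h_{1,n}$. I must also make sure the isomorphism $\Ext_{BP_*BP/I_n}(\bF_p) \cong H^*(P)$ holds not just below $|v_n|$ but through weight $u_{n+1}$ in the tensor factor relevant to $v_n$. For the last point I would use that each summand $v_n^k C(\bF_p)$ only needs internal degree less than $|v_n|$ in the $\bF_p$-part when $k \geq 1$. The $k=0$ part in weight $u_{n+1}$ is not claimed to be an isomorphism, and the statement only asserts the relation there.
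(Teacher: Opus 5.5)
Your argument is essentially the paper's: primitivity of $v_n$ modulo $I_n$ lets $v_n$ ``come along'' below weight $u_{n+1}$, and the relation is exactly the cobar coboundary of the $0$-cochain $v_{n+1}$, namely $\eta_R(v_{n+1})-v_{n+1}=v_n t_1^{p^n}-v_n^p t_1$, which your connecting homomorphism just repackages. Two points need tidying: $\bF_p[v_n]$ is not a $BP_*$-submodule of $BP_*/I_n$ (it is not closed under multiplication by $v_{n+1}$), so it is not a subcomodule, and your short exact sequence should instead be taken inside the cobar complex (cochains whose coefficient is a power of $v_n$ form a subcomplex in total weight $\le u_{n+1}$, with quotient $\bF_p\{v_{n+1}\}$ in degree $0$); and what identifies the $k\geq 1$ pieces with the cobar complex of $P$ is that only $t_1,\ldots,t_n$ occur in weight $<u_{n+1}$, and their coproducts mod $I_n$ are the standard ones, whereas your proposed bound by $|v_n|$ already fails for $h_{1,n}v_n$.
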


\begin{proof}
The first part follows because when we work mod $(p,v_1,\ldots,v_{n-1})$ we have $\eta_R(v_n) = v_n$, and more generally $\eta_R(v_n^a) = v_n^a$. So below the weight of $v_{n+1}$ the polynomial generator $v_n$ just comes along.

The relation follows from the right unit on $v_{n+1}$: By \cite[Proof of Theorem 4.3.2]{Ra04}, if we work mod $(p, v_1, \ldots, v_{n-1})$ we have $\eta_R(v_{n+1}) = v_{n+1} + v_n t_1^{p^n} - v_n^p t_1$. Hence $v_{n+1} - \eta_R(v_{n+1})$ gives the relation.
\end{proof}

\begin{remark}
In a given cohomological degree $s$ we can push this a little bit further, and $E_2^{s,*} \cong H^s(P)[v_n]$ in weight less than $u_{n+1}+s-2$.
\end{remark}

\begin{remark}
We also need $V(n-1)$ to be a ring spectrum in order to interpret this topologically and to take advantage of the Leibniz rule to compute differentials. It is also possible to rely on a $V(n-2)$-module structure on $V(n-1)$ for some arguments.
\end{remark}

\begin{lemma}[{\cite{Ma66}, \cite[Theorem 3.2.5]{Ra04}}] \label{l:MaySSE1term}
There is a May spectral sequence (MSS) converging to $H^*(P)$ with $E_1$-term
\[
 E_1^{*,*,*} = \Lambda(h_{i,j} \, : \, i \geq 1, j \geq 0) \otimes \bF_p[b_{i,j} \, : \, i \geq 1, j \geq 0],
\]
obtained by putting $t_i^{p^j}$ in filtration $2i-1$.
\end{lemma}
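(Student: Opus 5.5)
The plan is to filter the cobar complex $C^*(P)$ computing $H^*(P)$ and identify the associated graded with the cobar complex of a primitively generated Hopf algebra, whose cohomology is standard. Concretely, give the monomial $t_i^{p^j}$ filtration $2i-1$ and extend multiplicatively to $P$. A monomial $\prod t_i^{a_i}$ then has filtration $\sum_i (2i-1)\sum_j a_{i,j}$, where $a_i=\sum_j a_{i,j}p^j$ is the base $p$ expansion. On the cobar complex we use the induced tensor product filtration. This gives a filtered cochain complex and hence a spectral sequence converging to $H^*(P)$. Convergence is automatic, since in each fixed internal degree the filtration is finite.

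The main step is to check that the coproduct is compatible with the filtration and to compute the associated graded Hopf algebra $E^0P$. In $\Delta(t_i)=\sum_\ell t_\ell\otimes t_{i-\ell}^{p^\ell}$ the term $t_\ell\otimes t_{i-\ell}^{p^\ell}$ has filtration $(2\ell-1)+(2(i-\ell)-1)=2i-2<2i-1$ for $0<\ell<i$. The two primitive terms $t_i\otimes 1$ and $1\otimes t_i$ have filtration $2i-1$. Applying Frobenius, the same holds for $t_i^{p^j}$. So with filtration decreasing under $\Delta$ we should orient things so that $\Delta$ is filtration non-increasing. In the associated graded every $t_i^{p^j}$ is then primitive. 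The expected obstacle is verifying this for products. When the base $p$ digits of a product overflow (carries), the filtration can drop, so multiplication is only filtration-preserving in the lax direction. One has to check that $E^0P$ is the primitively generated algebra
\[
 E^0P=\bigotimes_{i\geq 1,\,j\geq 0}\bF_p[t_{i,j}]/(t_{i,j}^p),
\]
with $t_{i,j}$ the image of $t_i^{p^j}$ and each $t_{i,j}$ primitive. Equivalently, $E^0P$ is the dual of the universal enveloping algebra of the restricted abelian Lie algebra spanned by the duals of the $t_{i,j}$. The truncation $t_{i,j}^p=0$ holds because $t_{i,j}^p=t_{i,j+1}$ has strictly different (higher) filtration $2i-1$ versus $p(2i-1)$, so it vanishes in the associated graded. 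This is exactly May's argument, and I would cite \cite[Theorem 3.2.5]{Ra04} for the bookkeeping.

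With this in hand, $E_1=H^*(E^0P)$ is the tensor product over $(i,j)$ of $H^*(\bF_p[t]/(t^p))$ with $t$ primitive. That cohomology is $\Lambda(h)\otimes\bF_p[b]$. Here $h=[t]$ is in cohomological degree $1$, and $b=\sum_{0<k<p}\frac{1}{p}\binom{p}{k}[t^k|t^{p-k}]$ is the transpotence in cohomological degree $2$. Künneth then gives $E_1=\Lambda(h_{i,j})\otimes\bF_p[b_{i,j}]$. The tridegrees are: $h_{i,j}$ has May filtration $2i-1$, and $b_{i,j}$ has May filtration $p(2i-1)$.
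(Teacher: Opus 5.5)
Your proposal is correct. It is the standard May argument behind the result the paper cites (the paper gives no proof of its own): filter the cobar complex by the weight with $t_i^{p^j}$ in filtration $2i-1$, identify $E^0P \cong \bigotimes_{i,j} \bF_p[t_{i,j}]/(t_{i,j}^p)$ with every $t_{i,j}$ primitive, then apply K\"unneth to the factors $\Lambda(h_{i,j})\otimes\bF_p[b_{i,j}]$. One wording slip: $t_i^{p^{j+1}}$ has \emph{lower} filtration $2i-1 < p(2i-1)$, not higher, and that is exactly why $t_{i,j}^p$ vanishes in $\mathrm{gr}_{p(2i-1)}$.
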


Just as we did for the ANSS we can replace one of the gradings by the weight grading. To distinguish differentials in the MSS from differentials in the ANSS we denote the $r$'th differential in the MSS by $\delta_r$ and the $r$'th differential in the ANSS by $d_r$.

The OpenAI paper also uses a modified filtration \cite[Lemma 3.1]{OAI:V4} to show that one particular MSS differential is trivial. To describe it, let us write $\nu_0$ for the zero'th $p$-adic digit of an exponent, counted with multiplicity. For example, $\nu_0(t_1^{p+2} | t_2^3) = 2 + 3 = 5$ for $p > 3$. The cobar differential does not increase $\nu_0$, and sometimes decreases it, so it induces a filtration on the cobar complex. To refine equal $\nu_0$-weight we use the standard May filtration where $t_i^{p^j}$ has filtration $2i-1$. Equivalently, for sufficiently large $M$ we use the filtration
\[
 F_M = F_{May} + M \nu_0.
\]
The reduced coproduct still reduces the filtration by exactly $1$, so this filtration produces the same associated graded and the same $\delta_1$-differential.

\begin{notation} \label{n:xacybzABC}
We follow \cite[Section 3]{OAI:V4} and define
\begin{eqnarray*}
 x = h_{1,0} \quad a = h_{2,0} \quad c = h_{3,0} \quad y = h_{1,1} \quad b = h_{2,1} \quad z = h_{1,2} \\
 A = b_{1,0} \quad B = b_{2,0} \quad C = b_{1,1}
\end{eqnarray*}
The order seems idiosyncratic, but it makes sense if we also define $h_{[a,b)} = h_{b-a,a}$. With the open intervals notation, $\delta_1(h_{[a,b)}) = \sum\limits_{a < \ell < b} h_{[a,\ell)} h_{[\ell,b)}$. In particular
\[
 \delta_1(a) = xy, \quad \delta_1(b) = yz, \quad \delta_1(c) = xb + az.
\]
\end{notation}

\begin{example}
In the standard MSS we have $\delta_{2p-1}(B) = zA - yC$. If we use the modified filtration with $M > 0$ we instead have $\delta_{2p-1}(B) = zA$.
\end{example}

\begin{remark}
The OpenAI paper chooses $M = 3p$, but there is nothing special about $3p$. Any sufficiently large $M$ will do.
\end{remark}

\section{Some May spectral sequence calculations}
\begin{prop}[{\cite[Lemma 5.2, Proposition 5.5]{OAI:V4}}] \label{p:v4targets}
The only possible targets of a differential on $v_4$ on the $E_1$-page of the May spectral sequence converging to $H^{*,*}(P)$ are the $9$ classes listed in Table \ref{t:v4-targets}.
\end{prop}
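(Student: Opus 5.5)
We determine which monomials in the $E_1$-term of Lemma \ref{l:MaySSE1term} lie in the bidegree where a differential on $v_4$ would land. By the grading discussion above, $v_4$ sits in $E_2^{0,u_4}$ with $u_4 = 1+p+p^2+p^3$. In the ANSS a differential $d_{kq+1}$ on $v_4$ lands in cohomological degree $s = kq+1$ and weight $u_4 + k$. Via the isomorphism $E_2 \cong H^*(P)[v_4]$ below weight $u_5$ (Lemma \ref{l:v5rel}), a target with a factor of $v_4$ would have to come from $H^*(P)$ in weight at most $k$. That is far too small to support cohomological degree $kq+1$, since each $h_{i,j}$ and $b_{i,j}$ has weight at least $1$ and $b_{i,j}$ has degree $2$. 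So every target is a monomial in $H^*(P)$, and hence in $E_1$ of the MSS, of degree $s = k(2p-2)+1$ and weight $w = u_4 + k$ for some $k \geq 1$. The plan is to list all such monomials.

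\textbf{Reduction to a weight/degree inequality.} The generator $h_{i,j}$ has degree $1$ and weight $p^j u_i$, and $b_{i,j}$ has degree $2$ and weight $p^{j+1}u_i$. The slope of a generator is weight divided by degree. The cheapest generators are $x=h_{1,0}$ with slope $1$, then $y=h_{1,1}$ and $a=h_{2,0}$ with slope $p$ and $p+1$, then $A=b_{1,0}$ with slope $p/2$. Since $s = kq+1$ grows like $2pk$ while $w - u_4 = k$, we need many low-weight generators. But the exterior generators can each appear only once, so the bulk of the degree must come from the $b_{i,j}$, which cost weight at least $p/2$ per unit of degree. This gives
\[
 u_4 + k \geq \tfrac{p}{2}\,(k(2p-2)+1) - O(\text{exterior correction}),
\]
which forces $k$ to be small, in fact $k \leq p$ or so, with total weight close to $p^3$. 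Having fixed $k$, I would write a general monomial as $\prod h_{i,j}^{\epsilon_{i,j}} \prod b_{i,j}^{e_{i,j}}$ and solve the two linear equations for degree and weight. Only generators of weight $\leq u_4 + k$ can occur, namely those with $i + j \leq 4$. I would first reduce $w$ modulo $p$, which controls the number of weight-$1$ factors ($x$ only) and those of weight $\equiv 1 \pmod p$ ($a$, $c$, $h_{4,0}$). Then I would reduce modulo $p^2$, and so on. This digit-by-digit analysis of the weight is exactly what the $\nu_0$-filtration bookkeeping suggests.

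\textbf{Enumeration and the main obstacle.} Carrying out the digit-by-digit analysis should leave a short list of candidates. These are presumably monomials such as $h_{1,4}$-type classes times powers of $A$, $B$, $C$, or $x$, $y$, $z$, $a$, $b$, $c$ times powers of $b_{1,0}$ and $b_{1,1}$, with $k \in \{1, p\}$ and $k=1$ corresponding to $d_{2p-1}$. I would check that these are the $9$ entries of Table \ref{t:v4-targets}. The main obstacle is completeness of the case analysis. The constraint mixes digits because weights like $u_2 = p+1$ or $p\,u_2$ carry into several $p$-adic places. I would handle this by bounding the exponents of $b_{i,j}$ with $i+j \geq 3$ first, since their weights are near $p^3$, so at most one or two can appear. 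In each remaining case I would solve for the exponents of $A$, $B$, $C$ and the exterior factors. Throughout I use $p \geq 7$ to rule out coincidences among small digits.
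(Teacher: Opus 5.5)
Your method is the same as the paper's: expand the weight $u_4+k$ in base $p$ and work digit by digit, starting with the units digit. Your slope bound is a reasonable way to cap $k$. But the proposition \emph{is} the enumeration, and the proposal stops exactly where the proof has to start. Saying ``should leave a short list \ldots\ I would check that these are the $9$ entries'' is not an argument, and the list you anticipate is wrong. You expect $k\in\{1,p\}$, but five of the nine classes live at other values of $k$:
\begin{itemize}
\item at $k=2$: $xac\,BA^{p-2}C^{p-2}$ and $xacyz\,A^{p-2}C^{p-2}$;
\item at $k=p-1$: $yB^3A^{N_0-3}$, $bB^2A^{N_0-2}$ and $ybz\,BA^{N_0-2}$.
\end{itemize}
Also, ``$h_{1,4}$-type classes'' cannot occur at all: $h_{1,4}$ has weight $p^4>u_4+k$, and your own criterion $i+j\le 4$ already excludes it. Likewise, allowing ``one or two'' generators of weight $\ge p^3$ leaves open cases that must be closed.

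You are missing three concrete steps.

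(i) None of $h_{4,0},h_{3,1},h_{2,2},h_{1,3},b_{3,0},b_{2,1},b_{1,2}$ can occur. The target weight is below $2p^3$, so at most one of them could appear. The remaining factors would then need weight below $p^3$ in degree roughly $kq$. A short analysis of the $p^2$ and $p$ digits rules this out (for $k\ge 2$, size alone does).

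(ii) After that, only $x,a,c$ have nonzero units digit. So the number $m\le 3$ of them present satisfies $m\equiv 1+k\pmod p$. This forces $k\in\{1,2,p-1,p\}$ with $m=2,3,0,1$ respectively. The values $k>p$ allowed by the congruence and your slope bound (e.g.\ $k=p+1,p+2$) still have to be excluded via the $p$ digit and total size.

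(iii) The remaining exterior generators are $x,a,c,y,b,z$, and $kq+1$ is odd. So there are $e\in\{1,3,5\}$ exterior factors and exactly $k(p-1)-\frac{e-1}{2}$ polynomial factors. If $n_A,n_B,n_C$ are their exponents, the polynomial weight is $p(n_A+n_B+n_C)+p^2n_B+(p^2-p)n_C$. Matching the total weight against $u_4+k$ pins the polynomial part down to:
\begin{itemize}
\item $B^{p-2}$ or $B^{p-3}$ for $k=1$;
\item $BA^{p-2}C^{p-2}$ or $A^{p-2}C^{p-2}$ for $k=2$;
\item a power of $A$ times at most three $B$'s for $k\in\{p-1,p\}$.
\end{itemize}
This case analysis is what yields exactly the nine monomials of Table~\ref{t:v4-targets}.
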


\begin{table}[ht]
\centering
\begin{tabular}{c|l|l|l}
$k$ & May $E_1$-class & Method & Argument \\ \hline

$1$
& $acb\,B^{p-2}$
& MSS
& supports $\delta_1$ \\

$1$
& $acybz\,B^{p-3}$
& ANSS
& Theorem \ref{t:h14injective} \\ \hline

$2$
& $xac\,BA^{p-2}C^{p-2}$
& MSS
& supports $\delta_{2p-1}$ \\

$2$
& $xacyz\,A^{p-2}C^{p-2}$
& MSS
& $\delta_1$-boundary \\ \hline

$p-1$
& $yB^3A^{N_0-3}$
& MSS
& supports $\delta_{4p-3}$ \\

$p-1$
& $bB^2A^{N_0-2}$
& MSS
& supports $\delta_1$ \\

$p-1$
& $ybz\,BA^{N_0-2}$
& MSS
& $\delta_{2p-1}$-boundary \\ \hline

$p$
& $cyb\,A^{N+1}$
& MSS
& supports $\delta_1$ \\

$p$
& $ayb\,BA^N$
& ANSS
& Theorem \ref{t:nolongdiff}
\end{tabular}
\caption{Elimination of the possible targets of a differential on $v_4$.
Here $N_0=(p-1)^2$ and $N=p(p-1)-2$.}
\label{t:v4-targets}
\end{table}

Compare the following proof to the proof of \cite[Lemma 5.2 and Proposition 5.5]{OAI:V4}. We cannot simply copy their proof, as some of the inequalities they rely on require $p$ to be large.
\begin{proof}

We take advantage of the weight grading, with $v_4 \in E_2^{0, p^3+p^2+p+1}$. To start with, $d_r$ can only be nonzero when $r = kq + 1$ for some $k \geq 1$, and then $d_r(v_4)$ has weight grading $p^3+p^2+p+1+k$ and cohomological degree $r$. Now it is a simple matter of checking which monomials in the $E_1$-term of the MSS are in the right bidegree.

In addition to the variables $x$, $a$, $c$, $y$, $b$, $z$, $A$, $B$, $C$ from Notation \ref{n:xacybzABC}, we have variables $h_{4,0}$, $h_{3,1}$, $h_{2,2}$, $h_{1,3}$, $b_{3,0}$, $b_{2,1}$ and $b_{1,2}$ which could contribute to weight $p^3+p^2+p+1+k$. We write the weight in base $p$, so the target weight is $(1, 1, 1, 1+k)$ and we keep in mind that base $p$ addition might involve a carry. Each variable has weight which, in base $p$, consists of some interval of $1$'s surrounded by $0$'s:
\begin{eqnarray*}
 w(a) & = & (0, 0, 1, 1) \qquad
 w(b)   =   (0, 1, 1, 0) \qquad
 w(c)   =   (0, 1, 1, 1) \\
 w(x) & = & (0, 0, 0, 1) \qquad
 w(y)   =   (0, 0, 1, 0) \qquad
 w(z)   =   (0, 1, 0, 0) \\
 w(A) & = & (0, 0, 1, 0) \qquad
 w(B)   =   (0, 1, 1, 0) \qquad
 w(C)   =   (0, 1, 0, 0) \\
 w(h_{4,0}) & = & (1, 1, 1, 1) \qquad
 w(h_{3,1})   =   (1, 1, 1, 0) \qquad
 w(h_{2,2})   =   (1, 1, 0, 0) \\
 w(h_{1,3}) & = & (1, 0, 0, 0) \qquad
 w(b_{3,0})   =   (1, 1, 1, 0) \qquad
 w(b_{2,1})   =   (1, 1, 0, 0) \\
 w(b_{1,2}) & = & (1, 0, 0, 0)
\end{eqnarray*}

We first exclude the possibility that any monomial could have a factor of $h_{4,0}$, $h_{3,1}$, $h_{2,2}$, $h_{1,3}$, $b_{3,0}$, $b_{2,1}$ or $b_{1,2}$. These variables all have weight at least $p^3 = (1, 0, 0, 0)$, so if we have one of these we cannot have any carry to the $p^3$ position and we can have at most $1$ variable with weight that has a $1$ in the $p^2$ position. If we have such a variable with a $1$ in the $p^2$ position then there can be no carry from the $p$ position, and this quickly excludes any possible solution. If there are no such variables with a $1$ in the $p^2$ position we must have $p+1$ variables with a $1$ in the $p$ position, and there are not enough variables available.

The enumeration now follows by a case by case analysis, treating one value of $k$ at a time. If we have $e$ exterior generators then we must have $k(p-1) - \frac{e-1}{2}$ polynomial generators, and we must have $e \in \{1, 3, 5\}$. The variables $a$, $c$ and $x$ are special in that they are the only ones that contribute to the $p^0$ position of the weight.

If $k = 1$, we must have $1 + k = 2$ in the $p^0$ position so we must have exactly $2$ of $a$, $c$ and $x$. If we have $3$ exterior generators we must have $p-2$ polynomial generators, and if we have $5$ exterior generators we must have $p-3$ polynomial generators. The polynomial generators must all be $B$ to get a high enough weight, so we quickly find the $2$ monomials on our list and no more.

If $k = 2$, we must have $1 + k = 3$ in the $p^0$ position so we must have all $3$ of $a$, $c$ and $x$. We find the $2$ monomials on our list and no more by a similar analysis.

If $3 \leq k \leq p-2$ it is impossible to get a $1 + k$ in the $p^0$ position.

If $k = p - 1$, we cannot have any of $a$, $c$ and $x$ and we must have either $1$ or all $3$ of $b$, $y$ and $z$. Most of the polynomial generators must be $A$, otherwise the weight becomes too big, and we find that the $3$ monomials on our list are the only possible ones.

If $k = p$, we must have $1$ of $a$, $c$ and $x$. Once again most of the polynomial generators must be $A$, and again we find the $2$ monomials on our list and no more.

If $k > p$ a similar analysis shows that we either get the wrong digit in the $p$ position or the weight becomes too large, so there are no more solutions.
\end{proof}

\begin{thm} \label{t:MaySSdiff}
We have the following differentials in the May spectral sequence converging to $H^*(P)$:
\begin{eqnarray*}
 \delta_1(acbB^{p-2}) & = & xycbB^{p-2} + acyzB^{p-2} \\
 \delta_1(xacbA^{p-2}C^{p-2}) & = & xacyzA^{p-2}C^{p-2} \\
 \delta_1(bB^2A^{N_0-2}) & = & yzB^2A^{N_0-2} \\
 \delta_1(cybA^{N+1}) & = & -azybA^{N+1} \\
 \delta_{2p-1}(xacBA^{p-2}C^{p-2}) & = & - xaczA^{p-1}C^{p-2} + xacyA^{p-2}C^{p-1} \\
 \delta_{2p-1}(ybB^2A^{N_0-3}) & = & 2ybzBA^{N_0-2} \\
 \delta_{4p-3}(yB^3A^{N_0-3}) & = & -6bz BA^{N_0-1} + 6byBA^{N_0-2}C.
\end{eqnarray*}

\end{thm}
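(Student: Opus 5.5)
We will compute the $\delta_1$ differentials directly from the formulas in Notation \ref{n:xacybzABC}, using that $\delta_1$ is a derivation on the $E_1$-term of Lemma \ref{l:MaySSE1term}. The exterior variables satisfy $\delta_1(x)=\delta_1(y)=\delta_1(z)=0$, $\delta_1(a)=xy$, $\delta_1(b)=yz$, $\delta_1(c)=xb+az$. The polynomial generators $A,B,C$ are $\delta_1$-cycles, since $b_{i,j}$ is represented by a transpotence element built from $t_i^{p^j}$ whose reduced coproduct lies in lower May filtration by more than $1$.

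For example, $\delta_1(acb)=xycb - a(xb+az)b + acyz$. The middle term vanishes because $b^2=0$ and $a^2=0$, which gives the first formula. The second follows because $x$ kills $xy$ and $xb$, leaving only $xacyz$. The third is immediate from $\delta_1(b)=yz$. The fourth follows because $y$ and $b$ kill the $xb$ term, and the sign arises from moving $z$ past $y$ and $b$. Each computation only requires tracking Koszul signs.

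For the higher differentials, we first check that the targets are cycles on the earlier pages, so that the differential is defined. Then we use the known formula $\delta_{2p-1}(B) = zA - yC$ from the Example and the Leibniz rule on the $E_{2p-1}$-page. The monomials $x,a,c,y,b,z,A,C$ all survive to $E_{2p-1}$, and the classes involved must be $\delta_1$-cycles there. We have to verify that $xacBA^{p-2}C^{p-2}$ survives to $E_{2p-1}$. Here $\delta_1$ applied to $xac$ gives $x\cdot a\cdot(xb+az)+\ldots$, which is zero because of $x^2=0$, and the term $xaaz$ vanishes because $a^2=0$; $\delta_1(a)$ times $x$ also vanishes. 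Applying Leibniz then gives $xac\,\delta_{2p-1}(B)A^{p-2}C^{p-2}$ with the displayed sign. For $ybB^2A^{N_0-3}$ we get $2ybB(zA-yC)A^{N_0-3}$, and the $y^2$ term dies, which leaves $2ybzBA^{N_0-2}$.

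The genuinely hard step is $\delta_{4p-3}(yB^3A^{N_0-3})$. First, $\delta_{2p-1}(yB^3)=3yB^2(zA-yC)=3yzB^2A$, so $yB^3A^{N_0-3}$ is not a $\delta_{2p-1}$-cycle unless $3yzB^2A^{N_0-2}$ is already zero on $E_{2p-1}$. By the table it is: $\delta_1(bB^2A^{N_0-2}) = yzB^2A^{N_0-2}$. Thus on $E_{2p}$ the class is represented by the corrected cochain $yB^3A^{N_0-3} - 3bB^2A^{N_0-2}\cdot(\ldots)$. The next differential comes from the Massey-product-type zigzag $\delta_{2p-1}(-3bB^2A^{N_0-2}) = -6bB(zA-yC)A^{N_0-2}$, which reproduces the stated $-6bzBA^{N_0-1}+6byBA^{N_0-2}C$. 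To make this rigorous, I would work in the cobar complex. I would choose explicit representatives for $B$ (the standard $b_{2,0}$ cochain built from $t_2$ and $t_1$) and verify that the zigzag lands in filtration exactly $4p-3$ higher, rather than being corrected further. The subtle point is that the signs and the coefficient $6$ must be tracked through the lift. If this becomes messy, I would use the modified filtration $F_M$, in which $\delta_{2p-1}(B)=zA$ alone, to simplify the bookkeeping. I would then compare the two filtrations to recover the $C$ term.
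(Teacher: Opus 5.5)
Your proposal follows the paper's proof: the $\delta_1$'s come from the derivation formulas, the $\delta_{2p-1}$'s from the Leibniz rule and $\delta_{2p-1}(B)=zA-yC$, and $\delta_{4p-3}$ from the same zig-zag through $bB^2A^{N_0-2}$; you drop the Koszul signs for the odd classes $y$ and $b$, but the two omissions cancel, so your final coefficient $-6$ is right. Two small corrections: $a$, $b$, $c$ do not themselves survive to $E_{2p-1}$ (only the $\delta_1$-cycles $xac$ and $yb$ do, which is all you use), and the paper settles the step you defer to a cobar-complex computation by noting that the target bidegree has no possible targets in filtrations strictly between those of $yzB^2A^{N_0-2}$ and $bzBA^{N_0-1}$.
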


\begin{proof}
The $\delta_1$-differentials are immediate, and the $\delta_{2p-1}$-differentials follow from the usual differential $\delta_{2p-1}(B) = zA - yC$.

The differential of length $4p-3$ comes from a zig-zag: Let $w = yB^3 A^{N_0-3}$ and let $w' = -3bB^2 A^{N_0-2}$. Then we have
\[ \xymatrix{
 w \ar[r]^-{\delta_{2p-1}} & -3yzB^2A^{N_0-2} \\
 w' \ar[ur]^-{\delta_1} \ar[r]_-{\delta_{2p-1}} & 6bzBA^{N_0-1} - 6byBA^{N_0-2}C
}\]
This implies that $\wt{w} = w - w'$ supports the claimed $\delta_{4p-3}$. There is no possible target in a filtration between these, so the zig-zag really does calculate $\delta_{4p-3}$. Compare to \cite[Proof of Proposition 6.6]{OAI:V4}, and note that they also end up with a factor of $6$.
\end{proof}

This takes care of all the classes in Table \ref{t:v4-targets} that are listed as dying in the MSS.

\begin{thm} \label{t:h14injective}
Suppose $V(3)$ exists. In the ANSS converging to $\pi_* V(3)$ we have $d_{q+1}(v_4) = 0$.
\end{thm}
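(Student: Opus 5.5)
We plan to follow the strategy announced in the introduction: show that the only surviving potential target of $d_{q+1}(v_4)$, namely the class represented by $acybz\,B^{p-3}$ (the $k=1$ survivor in Table \ref{t:v4-targets}), can be detected by multiplication with $h_{1,4}$. We then use the relation of Lemma \ref{l:v5rel} with $n=4$ to force $h_{1,4}\, d_{q+1}(v_4)=0$. Concretely, by Proposition \ref{p:v4targets} and Theorem \ref{t:MaySSdiff}, $d_{q+1}(v_4)$ lies in the group $E_{q+1}^{q+1,u_4+1}$. In the relevant range this agrees with $H^{q+1}(P)$ in weight $u_4+1$, which is at most one-dimensional and spanned by the image $\gamma$ of $acybz\,B^{p-3}$. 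So $d_{q+1}(v_4)=\lambda\gamma$ for some $\lambda\in\bF_p$, and it suffices to show $\lambda=0$.

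The key relation is $h_{1,4}v_4 = h_{1,0}v_4^p$ in $E_2$, from Lemma \ref{l:v5rel}. Here $h_{1,4}$ and $h_{1,0}$ are permanent cycles: they lie in $H^1(P)$ in low weight and come from the sphere, or from $V(2)$, via the unit. We will use the $V(2)$-module structure on $V(3)$ from Toda, as in the remark after Lemma \ref{l:v5rel}, so that the Leibniz rule applies to multiplication by classes from $V(2)$ or the sphere. Since $h_{1,4}$ and $h_{1,0}$ are permanent cycles, the Leibniz rule gives
\[
 h_{1,4}\, d_{q+1}(v_4) = d_{q+1}(h_{1,4}v_4) = d_{q+1}(h_{1,0}v_4^p) = h_{1,0}\, p\, v_4^{p-1}d_{q+1}(v_4) = 0 .
\]
This needs a little care, because $V(3)$ need not be a ring spectrum for $p=7$. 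To get around this, we plan either to compute $d_{q+1}(v_4^p)$ via a $p$-fold Leibniz argument, or to note that $d_{q+1}(v_4^p)$ is the image under the Frobenius-type relation, and hence vanishes in the relevant degree. If needed, we can instead argue directly that the target group of $d_{q+1}$ on $h_{1,0}v_4^p$ vanishes for degree reasons. The cleanest route should be that $v_4^p$, or the class $h_{1,0}v_4^p=h_{1,4}v_4$, is detected by the $v_5$ right unit formula and therefore $d_{q+1}(h_{1,4}v_4)$ depends only on the algebraic relation.

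Once $h_{1,4}\lambda\gamma=0$ is established, it remains to prove that multiplication by $h_{1,4}$ is injective on $\gamma$. This means showing that $h_{1,4}\gamma\neq 0$ in $E_{q+1}^{q+2,*}$, and it is the step we expect to be the main obstacle. We would compute in the May spectral sequence, where $h_{1,4}$ is represented by $h_{1,4}$ itself. In the weight of $h_{1,4}\,acybz\,B^{p-3}$, we would enumerate, with the same base-$p$ digit bookkeeping as in Proposition \ref{p:v4targets}, all $E_1$ monomials in adjacent tridegrees. The goal is to show that nothing can hit $h_{1,4}acybzB^{p-3}$ under any $\delta_r$. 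Because $h_{1,4}$ involves the new digit $p^4$ and no other variable in this weight can supply it, the candidates are essentially $h_{1,4}$ times monomials of the $k=1$ enumeration, together with $h_{2,3},h_{1,3}$-type products. We would rule those out using $\delta_1(h_{2,3})=h_{1,3}h_{1,4}$ and the earlier enumeration showing no $h_{1,3}$ factors appear in that weight range. We would then check the same in the ANSS, where no earlier $d_r$ with $r<q+1$ exists since $d_r=0$ unless $r\equiv 1 \bmod q$. Thus $h_{1,4}\gamma\neq0$, which forces $\lambda=0$ and hence $d_{q+1}(v_4)=0$.
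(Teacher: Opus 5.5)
Your overall strategy matches the paper's. You reduce to the single candidate $u=acybz\,B^{p-3}$, differentiate the $v_5$-relation $h_{1,4}v_4=h_{1,0}v_4^p$ using the $V(2)$-module structure, and conclude from $h_{1,4}u\neq 0$. However, two steps would not go through as you describe them.

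\textbf{The Leibniz step.} Your claim that $h_{1,4}$ is a permanent cycle because it is ``in low weight and comes from the sphere'' is unfounded. It has weight $p^4$, and $t_1^{p^4}$ is only a cocycle mod $p$. So $d_{q+1}(h_{1,4}v_4)$ may contain an extra term $d_{q+1}(h_{1,4})v_4$.

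More seriously, none of your options for disposing of $d_{q+1}(v_4^p)$ at $p=7$ works:
\begin{itemize}
\item A $p$-fold Leibniz argument needs the ring structure you don't have.
\item The target group of $d_{q+1}$ on $h_{1,0}v_4^p$ is the very group $E^{2p,u_5+1}$ containing $h_{1,4}u$, so it cannot vanish if your detection argument is to succeed.
\item Even $E_2^{2p-1,u_5}$ of $V(3)$ is nonzero in the case of interest, since it contains $v_4^{p-1}u$.
\item The ``Frobenius'' and ``$v_5$ right unit'' remarks are not arguments: the relation equates the two sides in $E_2$ but says nothing about their differentials.
\end{itemize}
The missing device is to push the equation forward along $\sigma\colon BP_*V(3)\to\bF_p$, which kills every $v_4$-multiple. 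This gives $h_{1,4}\sigma(d_{q+1}(v_4))=h_{1,0}\sigma(d_{q+1}(v_4^p))$ in $H^*(P)$. Then show $H^{2p-1,u_5}(P)=0$: the only May $E_1$-monomials there are $h_{4,0}h_{1,1}h_{3,1}b_{3,0}^{p-2}$, $h_{4,0}h_{1,2}h_{2,2}b_{2,1}^{p-2}$ and $h_{4,0}h_{1,1}h_{2,1}h_{3,1}h_{1,3}b_{3,0}^{p-3}$, and they support linearly independent $\delta_1$'s.

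\textbf{The injectivity step.} Your prediction of the enumeration is wrong. In weight $u_5+1=(1,1,1,1,2)$ the $p^4$-digit is produced by carries. The possible sources of a May differential into $h_{1,4}u$ (cohomological degree $2p-1$) have pre-carry digits $(0,p,p,p+1,2)$, so none of them contains $h_{1,4}$. They do contain $h_{1,3}$, $h_{2,2}$, $h_{3,1}$, $h_{4,0}$ and $b_{3,0}$; the earlier enumeration excluding such factors was for weight $u_4+k$, not $u_5+1$.

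The complete list is $f_3,f_5,f_5',f_5'',f_7$. The first four support independent $\delta_1$'s. But $f_7=h_{2,0}h_{4,0}h_{1,1}h_{2,1}h_{3,1}h_{2,2}h_{1,3}b_{3,0}^{p-4}$ is not removed by $\delta_1$, and a priori could hit $h_{1,4}u$ by a longer May differential. Nothing in your sketch, in particular $\delta_1(h_{2,3})=h_{1,3}h_{1,4}$, addresses this class.

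The paper rules it out with the zero-digit modified May filtration. The cobar differential never increases $\nu_0$, and $\nu_0(f_7)=p^2-4p+2<p^2-3p+2=\nu_0(h_{1,4}u)$. Without this (or an equivalent) idea, your argument that $h_{1,4}u$ survives is incomplete.
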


\begin{proof}
Let $u = acybz B^{p-3}$. If $d_{q+1}(v_4) \neq 0$ then the only possibility is $d_{q+1}(v_4) \doteq u$, where $\doteq$ denotes equality up to a unit in $\bF_p$. By Lemma \ref{l:v5rel} we have $h_{1,4} v_4 = h_{1,0} v_4^p$ on the $E_2$-term of the ANSS and hence also on the $E_{q+1}$-term. This is in weight $u_5$.

Now we compute $d_{q+1}$ of both sides, and by using that $V(3)$ is a $V(2)$-module spectrum we get
\begin{equation} \label{eq:dofv5rel}
 d_{2p-1}(h_{1,4}) v_4 - h_{1,4} d_{2p-1}(v_4) = d_{2p-1}(h_{1,0})v_4^p - h_{1,0} d_{2p-1}(v_4^p).
\end{equation}
We can apply $\sigma$ to this to obtain
\begin{equation} \label{eq:sigmadofv5rel}
 h_{1,4} \sigma(d_{2p-1}(v_4)) = h_{1,0} \sigma(d_{2p-1}(v_4^p)).
\end{equation}

This equation is in cohomological degree $2p$ and weight $u_5+1$. If $p \geq 11$, so $V(3)$ is a ring spectrum, we automatically have $d_{2p-1}(v_4^p) = 0$. If $p = 7$ then we cannot interpret $v_4^p$ as a product, so this does not immediately follow. Instead we make the following argument:

This class lives in $H^{2p-1, u_5}(P)$, so it suffices to compute $H^*(P)$ in that bidegree. By a similar argument as in the proof of Proposition \ref{p:v4targets}, the May $E_1$-page in that bidegree is spanned by the following $3$ monomials:
\begin{eqnarray*}
 r_1 & = & h_{4,0} h_{1,1} h_{3,1} b_{3,0}^{p-2} \\
 r_2 & = & h_{4,0} h_{1,2} h_{2,2} b_{2,1}^{p-2} \\
 r_3 & = & h_{4,0} h_{1,1} h_{2,1} h_{3,1} h_{1,3} b_{3,0}^{p-3}.
\end{eqnarray*}
To check this, note that in base $p$ we have $u_5= (1, 1, 1, 1, 1)$. To get there, before carrying we have to have $(0, p, p, p+1, 1)$ or $(0, p, p+1, 1, 1)$. We have to have either $3$ or $5$ exterior generators, and now the case-by-case analysis is straightforward.

These all support a $\delta_1$-differential, and the $\delta_1$-images are linearly independent, so the $E_2$-term of the May spectral sequence is trivial in that bidegree and we conclude that $\sigma(d_{2p-1}(v_4^p))$ is also $0$ at $p = 7$.

Hence it suffices to show that $h_{1,4} u$ in the $E_1$-term of the MSS is not the target of a May differential, because then $h_{1,4} u$ survives to $E_{q+1}^{*,*}$ and if $d_{q+1}(v_4) \doteq u$ we get a contradiction. The possible sources of a MSS differential on $h_{1,4}u$ are given in \cite[Table 2]{OAI:V4}. The same base-$p$ digit enumeration as in Proposition \ref{p:v4targets} shows that for every $p \geq 7$ this is a complete list. For convenience we repeat the them here:
\begin{eqnarray*}
 f_3 & = & h_{[0, 2)} h_{[0,4)} h_{[1,4)} b_{3,0}^{p-2} \\
 f_5 & = & h_{[0, 3)} h_{[0, 4)} h_{[1,2)} h_{[1,4)} h_{[3,4)} b_{3,0}^{p-3} \\
 f_5' & = & h_{[0, 2)} h_{[0, 4)} h_{[1,3)} h_{[1,4)} h_{[3,4)} b_{3,0}^{p-3} \\
 f_5'' & = & h_{[0, 2)} h_{[0, 4)} h_{[1,2)} h_{[1,4)} h_{[2,4)} b_{3,0}^{p-3} \\
 f_7 & = & h_{[0, 2)} h_{[0, 4)} h_{[1,2)} h_{[1,3)} h_{[1,4)} h_{[2,4)} h_{[3,4)} b_{3,0}^{p-4} \\
\end{eqnarray*}
To check this, note that the target weight in base $p$ is $(1, 1, 1, 1, 2)$. To get there, before carrying we have to have $(0, p, p, p+1, 2)$ and we must have $3$, $5$ or $7$ exterior generators. Once again the case-by-case analysis is straightforward.

The first $4$ generators die in the MSS because they support linearly independent $\delta_1$-differentials, and this leaves the one class $f_7$ which could, a priori, support a May differential hitting $h_{1,4}u$. To see that this cannot happen, \cite{OAI:V4} uses the zero digit modified May filtration we described after Lemma \ref{l:MaySSE1term} with $M = 3p$. Any sufficiently large $M$ works. Since $\nu_0(f_7) = p^2 - 4p + 2 < \nu_0(h_{1,4} u) = p^2 - 3p + 2$, there cannot be such a differential in the modified May spectral sequence and hence either $u$ dies in the MSS or $h_{1,4} u$ survives to the ANSS.
\end{proof}

\begin{remark}
The above class $u$ survives the MSS. We do not know if it is a permanent cycle in the ANSS, as it could potentially support a $d_{q+1}$-differential.
\end{remark}

\begin{remark}
The above argument makes the following precise: The leading term of $f_7$ has fewer zero-digit factors than the leading term of $h_{1,4} u$, and because the cobar complex coproduct cannot increase the number of zero-digit factors, it cannot map $f_7$ to $h_{1,4} u$. It is possible, but more awkward, to make this argument precise without the language of spectral sequences.
\end{remark}


\begin{lemma} \label{l:smallstems}
We have $\pi_{q(2p^2+4p+1)-5} V(3) \cong \bZ/p\{\gamma\}$, where $\gamma$ is detected by the infinite cycle $\theta = aybB \in E_2^{5, 2p^2+4p+1}$. Moreover, $\pi_{q(2p^2+4p+2)-6} V(3) = 0$.
\end{lemma}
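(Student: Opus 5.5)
The plan is to compute both stems through the ANSS for $V(3)$. The relevant weights lie far below $u_4 = p^3+p^2+p+1$, so by the first lemma of Section 2, $E_2^{s,w} \cong H^{s,w}(P)$ there. The ANSS is sparse: $d_r$ is zero unless $r = kq+1$. Hence the whole problem reduces to two steps. First, enumerate $H^*(P)$ in the bidegrees $(s,w)$ with $qw - s$ equal to the given stem, using the May spectral sequence. Second, check that no differentials can enter or leave.

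The stem $q(2p^2+4p+1)-5$ is reached by $(s,w) = (5, 2p^2+4p+1)$. It could also be reached by $(5+kq,\, 2p^2+4p+1+k)$ for $k \ge 1$, which requires weight increasing by $1$ while $s$ increases by $q$. It is similarly reached by $(5-kq,\, w-k)$, but for $p\ge 7$ that has negative cohomological degree.

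In base $p$ the weight $2p^2+4p+1$ is $(2,4,1)$. Only $x,a$ contribute to the units digit, and the available variables are $x,y,z,a,b,A,B,C$, together with $c$ and $h_{1,3}$-type variables, which are too big. So the same digit-enumeration as in Proposition \ref{p:v4targets} gives the short list of May $E_1$ monomials in $(5, (2,4,1))$. I expect exactly one of them, $aybB$, to survive: its $\delta_1$ is $xy\cdot ybB - \dots = 0$ since $y^2=0$ and $\delta_1(b) = yz$ gives $ay\,yz\,B=0$. The remaining monomials either support $\delta_1$ or are $\delta_1$-boundaries, e.g.\ $xybB$-type terms. I must also check that $\delta_{2p-1}(B) = zA - yC$ gives $aybzA \neq 0$ only as a correction, not a differential on $\theta$. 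Modulo $y$ the $\delta_{2p-1}$ image is $ayb(zA-yC) = aybzA$. That is nonzero in $E_1$, so I will need a $\delta_1$-zig-zag or must compute more carefully whether $aybzA$ is already a $\delta_1$-boundary. Concretely, since $\delta_1(ab)$ contains $az\,\cdot$ terms, I would correct $\theta$ by a lower-filtration term such as $\pm a b\, b\ldots$, or use the modified filtration with $M$ large to kill this possibility via $\nu_0$. This is the step I expect to be the main obstacle: showing $aybB$ survives the MSS to a nonzero class of $H^5(P)$.

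For the higher-filtration contributions $(5+kq, w+k)$ and the targets of ANSS differentials on $\theta$, the key counting point is that the number of exterior plus polynomial generators grows like $s$ while the weight is bounded by about $2p^2$. Each generator has weight at least $1$, and most have weight at least $p$. A monomial in cohomological degree $s\approx kq$ and weight $2p^2+4p+1+k$ therefore needs most generators to be $x$, and $x^2=0$. So the enumeration is empty for $k \ge 1$, and similarly for the source bidegrees of differentials hitting $\theta$, which have $s<5$ and weight $w-k$, forcing $s<0$. Hence $\theta$ is an infinite cycle and not a boundary, giving $\pi \cong \bZ/p\{\gamma\}$. There are no extension problems, since only one $E_\infty$ group is nonzero.

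For the second stem, $q(2p^2+4p+2)-6$, I run the same enumeration in $(6, (2,4,2))$ and in the shifted bidegrees. The digit $2$ in the units place forces both $x$ and $a$. The middle digit $4$ and top digit $2$ then must come from $y,z,b,A,B,C$ with a total of four generators, and the resulting short list should consist of classes that support or are hit by $\delta_1$ or $\delta_{2p-1}$ in Theorem \ref{t:MaySSdiff}'s fashion. For instance, $xaybB$ has $\delta_1$ picking up nothing, but it is the $\delta_1$-image of $ab\,yB$-type classes, possibly after a zig-zag. This gives $E_2=0$ there, and the vanishing of $\pi$ follows.
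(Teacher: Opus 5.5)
\textbf{There is a genuine gap.} Your overall strategy is the paper's: sparseness of the ANSS, $E_2\cong H^*(P)$ in this range, and base-$p$ enumeration of May $E_1$-monomials. But the enumeration rests on a false premise. The generator $c=h_{3,0}$ has weight $u_3=p^2+p+1=(1,1,1)$, which fits under $(2,4,1)$. So $c$ is not ``too big'', and $x,a$ are not the only generators contributing to the units digit.

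\textbf{Consequences for the first part.} In bidegree $(5,2p^2+4p+1)$ the $E_1$-term is spanned by $\theta=aybB$ and $cybA$, and $\delta_1(cybA)=\pm aybzA$. This is exactly what removes your ``main obstacle'': the candidate $\delta_{2p-1}(\theta)=aybzA$ is already zero on the May $E_2$-page. In more detail, the $E_1$-monomials in $(6,2p^2+4p+1)$ are $ayB^2$, $abAB$, $aybzA$, $cyAB$ and $cbA^2$.
\begin{itemize}
\item $abAB$, $cyAB$ and $cbA^2$ support linearly independent $\delta_1$'s.
\item $aybzA$ is the $\delta_1$-boundary of $cybA$.
\item The only survivor, $ayB^2$, lies in higher May filtration than $\theta$, so no May differential on $\theta$ can hit it.
\end{itemize}
Since $(4,2p^2+4p+1)$ is empty, $\theta$ spans $E_2^{5,2p^2+4p+1}$. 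Neither of your proposed workarounds would succeed. A correction term of the form $ab\,b\cdots$ is zero because $b^2=0$. And $\nu_0(\theta)=\nu_0(aybzA)=p+1$, so the modified filtration does not separate the two classes.

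\textbf{Consequences for the second part.} The units digit $2$ can come from any two of $x,a,c$. The $E_1$-term in $(6,2p^2+4p+2)$ is therefore $acAB$, $xcybA$, $acyzA$ and $xaybB$. Your proposed $\delta_1$-preimage of $xaybB$, an ``$ab\,yB$-type class'', is $-\theta$ itself, which has the wrong weight and is a $\delta_1$-cycle. The actual preimage is $acyB$, via the $xb$-term of $\delta_1(c)=xb+az$: $\delta_1(acyB)=\pm xaybB$. Likewise $-xcybA+acyzA=\delta_1(acbA)$, while $acAB$ supports a nonzero $\delta_1$. Without $c$ you would find no preimage at all and would wrongly conclude that $xaybB$ survives.

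\textbf{The case $k\geq 1$.} Your heuristic that ``most generators must be $x$'' fails for small $k$. At $k=1$, a monomial in cohomological degree about $2p$ can consist mostly of about $p$ copies of $A$, with total weight about $p^2<2p^2+4p+2$. What actually rules these bidegrees out is the digit bookkeeping:
\begin{itemize}
\item For $k=1$, exactly two of $x,a,c$ occur. The roughly $p$ polynomial generators force a carry out of the $p$-digit, after which that digit can reach at most $p+3$, not $p+4$.
\item For $k=2$, all of $x,a,c$ must occur, and $c$ then pushes the weight over.
\item For $3\le k\le p-2$, the units digit $1+k$ is unreachable.
\item For larger $k$, the weight is far too big.
\end{itemize}
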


In addition to being prime-independent this is a little bit stronger than the statement in the OpenAI paper \cite[Lemma 7.1]{OAI:V4}. The point of the second part is that $\alpha_1 \gamma \in \pi_{q(2p^2+4p+2)-6} V(3)$, so this guarantees that $\alpha_1 \gamma = 0$.

\begin{proof}
The $E_1$-term of the MSS converging to bidegree $(5, 2p^2+4p+1)$ has exactly two monomials $\theta = aybB$ and $cybA$. The second of these supports a $\delta_1$, and there is nothing in bidgree $(4, 2p^2+4p+1)$ so $aybB$ is not a MSS boundary. To check that $\theta$ survives the MSS, we look in bidegree $(6, 2p^2+4p+1)$. On the $E_1$-term of the MSS there are $5$ monomials, but only $ayB^2$ survives to $E_2$. This is in higher May filtration than $\theta$, so $\theta$ cannot support a May differential hitting it. Hence $E_2^{5,2p^2+4p+1} \cong \bZ/p\{\theta\}$.

Next we prove that $\theta$ is an infinite cycle. Since every differential has length $kq+1$ for some $k \geq 1$, our class $\theta$ cannot be a boundary. So it suffices to show that $E_2^{5+kq+1, 2p^2+4p+1+k} = 0$, and for that it suffices to show that there are no monomials in that bidegree in the $E_1$-term of the MSS. The base $p$ representation of the target weight is $(2, 4, 1+k)$.

For $k = 1$ the target weight is $(2, 4, 2)$. So we need exactly two of the exterior generators $a$, $c$ and $x$, and a total of either $2$ or $4$ exterior generators and either $p+1$ or $p$ polynomial generators. When adding up the total weight we have to do one carry from the $p$ digit to the $p^2$ digit, so without carry the target weight is $(1, p+4, 2)$. In either case we cannot get to $p+4$ in the $p$ digit, so the target is trivial.

For $k = 2$ the target weight is $(2, 4, 3)$ so we need all three of the exterior generators $a$, $c$ and $x$. This already gives a $1$ in the $p^2$ digit, and since every remaining generator contributes a weight of at least $p$ the weight becomes too large. Similarly, for $k > 2$ the weight becomes too large. This shows that $\theta$ survives the ANSS.

To show that $\pi_{q(2p^2+4p+1)-5} V(3)$ is exactly $\bZ/p$ it is enough to show that there is nothing else in the ANSS contributing to that degree, or that $E_2^{5+qk, 2p^2+4p+1+k} = 0$ for $k \geq 1$. The argument is similar, using the same base-$p$ digit counting technique as above, so we omit it.

For the second part, it suffices to show that $E_2^{6+qk, 2p^2+4p+2+k} = 0$ for all $k \geq 0$. For $k = 0$ we have $4$ monomials $acAB$, $xcybA$, $acyzA$ and $xaybB$ in the $E_1$-term of the MSS. The first $3$ support a $\delta_1$-differential, and the kernel is spanned by $-xcybA + acyzA$ and $xaybB$. These are both $\delta_1$-boundaries. So the May $\delta_1$-differential kills all $4$ classes.

For $k \geq 1$ the argument is once again similar.
\end{proof}

\begin{prop} \label{p:longdiff}
Suppose $V(3)$ exists. In the ANSS converging to $\pi_* V(3)$ we have
\[
 d_{(p-2)q+1}(xacyBC^{p-2}) \doteq aybBA^K.
\]
\end{prop}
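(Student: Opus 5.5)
The plan is to show that $\theta A^K = aybBA^K$ is a nonzero class in the ANSS $E_2$-term which detects an element of homotopy that is zero. It must therefore be hit by a differential, and a degree count will leave $xacyBC^{p-2}$ as the only possible source. Throughout I use that $V(3)$ is a $V(2)$-module and a module over the sphere, so products with elements of $\pi_*S$ make sense. I also use that in the ANSS $x=h_{1,0}$ detects $\alpha_1$ and $A=b_{1,0}$ detects $\beta_1$.

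\emph{Step 1: bookkeeping.} First I will determine $K$ by matching weights. The source $xacyBC^{p-2}$ has cohomological degree $5+2(p-1)$, and $d_{(p-2)q+1}$ raises the weight by $p-2$. Comparing with $w(aybBA^K)$ in base $p$ should give $K=N=p(p-1)-2$, so the target is exactly the last entry of Table \ref{t:v4-targets}. Next I will check with the base-$p$ digit counting of Proposition \ref{p:v4targets} two further facts:
\begin{itemize}
\item both classes survive the MSS to nonzero classes in $E_2$;
\item no ANSS differential shorter than $(p-2)q+1$ can hit $aybBA^K$ or be supported by $xacyBC^{p-2}$.
\end{itemize}
For the target, its May $\delta_1$ vanishes because $\delta_1(a)=xy$, and $y^2=0$ kills that term. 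The MSS sources in the relevant bidegree are handled by the same enumeration as in Lemma \ref{l:smallstems}. Finally, I need that in the stem of $aybBA^K$ the only classes with lower ANSS filtration that could serve as sources are the ones on the list. Then $xacyBC^{p-2}$ is the unique candidate hitting $aybBA^K$.

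\emph{Step 2: the target represents $\gamma\beta_1^K$.} By Lemma \ref{l:smallstems}, $\theta=aybB$ is a permanent cycle detecting $\gamma$. Multiplicativity of the ANSS pairing $\pi_*S\otimes\pi_*V(3)\to\pi_*V(3)$ then shows that $\theta A^K$ is a permanent cycle detecting $\beta_1^K\gamma$, provided the target is not hit earlier; Step 1 rules that out.

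\emph{Step 3: the main obstacle, namely showing $\beta_1^K\gamma=0$.} The input is Toda's relation from \cite{To71} expressing $\beta_1^p$ (or a suitable power of $\beta_1$ times $\alpha_1$) through a Toda bracket involving $\alpha_1$. Concretely, I would first try to use that $\beta_1^{p}$ lies in a bracket of the shape $\langle \alpha_1, \beta_1^{p-1}, \alpha_1\rangle$ or a closely related one, coming from Toda's relation $\alpha_1\beta_1^p=0$. Writing $\beta_1^K\gamma=\beta_1^{K-p}\beta_1^p\gamma$, the juggling formula gives
\[
\langle \alpha_1,\beta_1^{p-1},\alpha_1\rangle\,\gamma \subseteq \alpha_1\langle \beta_1^{p-1},\alpha_1,\gamma\rangle .
\]
The bracket on the right is defined because $\alpha_1\gamma=0$, which is the second part of Lemma \ref{l:smallstems}. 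I then need the resulting element $\alpha_1\beta_1^{K-p}\langle\beta_1^{p-1},\alpha_1,\gamma\rangle$ to lie in a homotopy group that vanishes, or to be divisible by $\alpha_1\beta_1^p=0$. I would verify this by the same digit-counting vanishing argument used in Lemma \ref{l:smallstems}. I also need the indeterminacy to contain only zero, so that $\beta_1^K\gamma=0$ on the nose; I expect controlling this indeterminacy to be the delicate point.

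Given Step 3, the nonzero $E_2$ class $\theta A^K$ detects zero, so it must be a boundary. Step 1 leaves only $d_{(p-2)q+1}$ from $xacyBC^{p-2}$, which gives the claimed differential up to a unit.
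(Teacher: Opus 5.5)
\textbf{There is a genuine gap in Step~3.} Your overall framework is the paper's: $\theta A^K$ is a product of permanent cycles, so once $\beta_1^K\gamma=0$ it must be a boundary, and a May-filtration count then leaves $xacyBC^{p-2}$ as the only possible source. But Step~3 is the real content of the proposition, and the route you sketch cannot work.

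Toda's relation is $\alpha_1\beta_1^p=0$, while $\alpha_1\beta_1^{p-1}\neq 0$ in $\pi_*S$. Hence neither $\langle\alpha_1,\beta_1^{p-1},\alpha_1\rangle$ nor $\langle\beta_1^{p-1},\alpha_1,\gamma\rangle$ is defined. There is also no reason for $\beta_1^p$ to lie in a three-fold bracket of that shape.

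Your fallback, showing by digit counting that the resulting element lies in a zero group, is circular. That element lives in the same stem as $\beta_1^K\gamma$, where the $E_2$-term contains the class $aybBA^K$ the proposition is about. So no enumeration of $E_2$ monomials can show that stem vanishes; a genuinely homotopical input is required.

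The missing idea is that $\beta_1$ itself is the $p$-fold bracket $\langle\alpha_1,\ldots,\alpha_1\rangle$. Following Ravenel \cite[Section 7.6]{Ra04}, set $w_i=\langle\alpha_1,\ldots,\alpha_1,\beta_1^{ip}\rangle$ with $i+1$ copies of $\alpha_1$; this is where $\alpha_1\beta_1^p=0$ is used. Juggling gives $\alpha_1 w_{p-2}=\langle\alpha_1,\ldots,\alpha_1\rangle\beta_1^{(p-2)p}=\beta_1^{(p-1)^2}$. Thus $\beta_1^{(p-1)^2}$ is divisible by $\alpha_1$ in $\pi_*S$, and $\beta_1^{(p-1)^2}\gamma=\pm w_{p-2}(\alpha_1\gamma)=0$ by the second half of Lemma~\ref{l:smallstems}. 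No bracket involving $\gamma$ appears, so your indeterminacy worry disappears.

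This also corrects your Step~1 bookkeeping:
\begin{itemize}
\item $K=(p-1)^2=p^2-2p+1$, not $N$.
\item The source $xacyBC^{p-2}$ has four exterior and $p-1$ polynomial factors. So it sits in cohomological degree $2p+2$, not $5+2(p-1)$, and in weight $p^3+4p+3$.
\item $d_{(p-2)q+1}$ therefore lands in bidegree $(2p^2-4p+7,\,p^3+5p+1)$, which is the bidegree of $aybBA^{(p-1)^2}$.
\item The table entry $aybBA^N$ is $A^{p-3}$ times this target and is hit from $xacyBA^{p-3}C^{p-2}$. That is Theorem~\ref{t:nolongdiff}, not this proposition.
\end{itemize}

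Finally, the source enumeration you defer is not vacuous. At $k=1$ there is a $\delta_1$-cycle $ybB^2A^{p^2-3p+1}$, which has to be removed by a $\delta_{2p-1}$. At $k=p-2$ there are three May classes, which collapse to a one-dimensional $E_2$ spanned by $xacyBC^{p-2}$.
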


\begin{proof}
Consider the class $\theta = ayb B \in E_2^{5, 2p^2+4p+1}$. By Lemma \ref{l:smallstems} above it is an infinite cycle, and represents a non-zero element $\gamma \in \pi_{q(2p^2+4p+1) - 5} V(3)$. Let $K = p^2-2p+1$. Because $\beta_1$ is represented by $A$ in the ANSS and $\theta$ is an infinite cycle, $A^K \theta$ cannot support an outgoing differential and if $\beta_1^K \gamma = 0$ this class must be a boundary in the ANSS.

By Lemma \ref{l:smallstems} above, $\alpha_1 \gamma = 0$. Now we follow Ravenel \cite[Section 7.6]{Ra04}: Define $w_i = \langle \alpha_1,\ldots,\alpha_1, \beta_1^{ip} \rangle$ for $1 \leq i \leq p-2$, where we have $i+1$ factors of $\alpha_1$. Then by Toda-bracket juggling and $\langle \alpha_1,\ldots,\alpha_1 \rangle = \beta_1$ we have
\[
 \alpha_1 w_{p-2} = \langle \alpha_1,\ldots,\alpha_1 \rangle \beta_1^{(p-2)p} = \beta_1^{p^2-2p+1}.
\]
Ravenel concludes that $\beta_1^{p^2-p+1}$ is divisible by $\alpha_1 \beta_1^p$ and is therefore $0$. But the exact same argument shows that $\beta_1^{p^2-2p+1} \gamma$ is divisible by  $\alpha_1 \gamma$ and is therefore $0$.

This means that we have a differential $d_r(x) = aybBA^K$ for some $r$. We must have $r = kq+1$ for some $k \geq 1$. Our class is in bidegree $(2p^2-4p+7, p^3+5p+1)$, so the source is in bidegree $(2p^2-4p+6-kq, p^3+5p+1-k)$.

We are in the range where $E_2^{*,*} \cong H^*(P)$, so we compute the $E_2$-term using the MSS. For $k = 1$ the May $E_1$-page has a class $ybB^2A^{p^2-3p+1}$. It is a $\delta_1$-cycle, but it supports a $\delta_{2p-1}$-differential because of the $B^2$-factor so it does not survive to the ANSS.

For $2 \leq k \leq p-3$ the $E_1$-term of the MSS in this bidegree is $0$, so there are no potential differentials. For $k = p-2$ the situation is a little bit more complicated. On the $E_1$-term of the MSS we have $3$ classes $xacyBC^{p-2}$, $xaczABC^{p-3}$ and $xacbAC^{p-2}$. The last supports a $\delta_1$, while $\delta_{2p-1}(xacB^2C^{p-3}) \doteq xaczABC^{p-3} - xacyBC^{p-2}$. It follows that the $E_\infty$-term of the MSS in this bidegree is $1$-dimensional, so the $E_2$-term of the ANSS is $1$-dimensional and detected by $xacyBC^{p-2}$. For $k = p - 1$ we once again get nothing, and for $k \geq p$ the differential would originate from the $4$'th quadrant.

Hence the only possible way for $A^K \theta$ to die is the claimed differential.
\end{proof}

We can also argue topologically for this differential. In Ravenel's calculation, $w_{p-2}$ represents $\alpha_1 \beta_{p/p}^{p-2}$, and since $d_{q+1}(\beta_{p/p}) = \alpha_a \beta_1^p$ we get
\[
 d_{(p-1)q+1}(\beta_{p/p}(\alpha_1\beta_{p/p}^{p-1})) = \beta_1^{p^2-p+1}.
\]

In our situation the differential $d_{q+1}(\beta_{p/p}) = \alpha_1 \beta_1^p$ is replaced by the null-homotopy of $\alpha_1 \gamma$ represented, up to a unit, by $\xi = acyB$. (We can think of this as a $d_1$-differential in the ANSS, which makes sense if we use the cobar complex to define the $E_1$-term.) The class $\alpha_1$ is represented by $x$, and the class $\beta_{p/p}$ is represented by $C$, so the same argument as in Ravenel gives
\[
 d_{(p-2)q+1}(x \xi C^{p-2}) \doteq aybBA^K.
\]

\begin{thm} \label{t:nolongdiff}
Suppose $V(3)$ exists. In the ANSS converging to $\pi_* V(3)$ we have $d_{pq+1}(v_4) = 0$.
\end{thm}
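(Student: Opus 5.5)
The plan is to show that the only possible target of $d_{pq+1}(v_4)$, namely the class represented by $aybBA^N$ with $N = p(p-1)-2$ (Table \ref{t:v4-targets}), is already zero by the time we reach $E_{pq+1}$. It is killed at the earlier page $E_{(p-2)q+1}$ by the differential of Proposition \ref{p:longdiff}.

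First I would align the exponents. In Proposition \ref{p:longdiff} we have $K = p^2-2p+1$, and $N = p^2-p-2 = K + (p-3)$. So the target $aybBA^N$ equals $A^{p-3}\cdot aybBA^K$. The differential we want is then
\[
 d_{(p-2)q+1}(A^{p-3} xacyBC^{p-2}) \doteq A^{p-3} aybBA^K = aybBA^N .
\]
Since $A$ represents $\beta_1$, which comes from the sphere, $A$ is a permanent cycle. Because $V(3)$ is a module over $S$, the Leibniz rule for multiplication by $\beta_1$ gives this differential directly from Proposition \ref{p:longdiff}. This step needs no ring structure on $V(3)$, so it works for $p \geq 7$.

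Two checks are needed for the conclusion to be legitimate. The first is that $A^{p-3} xacyBC^{p-2}$ survives to $E_{(p-2)q+1}$; this holds because it is $\beta_1^{p-3}$ times a class that survives to that page. The second is that the resulting element is nonzero as a representative of $aybBA^N$. Concretely, I would check that $aybBA^N$ is nonzero in $E_2$, meaning it survives the MSS. This is required anyway for it to be a potential target, and I would verify it by the same base-$p$ digit counting used in Lemma \ref{l:smallstems}, confirming that no MSS differential enters or leaves it. If $aybBA^N$ were already zero at $E_2$, the theorem would follow trivially.

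Granting these, $aybBA^N$ is a $d_{(p-2)q+1}$-boundary. Since $(p-2)q+1 < pq+1$, it is zero in $E_{pq+1}$. By Proposition \ref{p:v4targets} and Table \ref{t:v4-targets}, every other class in the target bidegree of $d_{pq+1}(v_4)$ is eliminated: $cybA^{N+1}$ supports a $\delta_1$ by Theorem \ref{t:MaySSdiff}. Hence $E_{pq+1}$ is zero in that bidegree, and $d_{pq+1}(v_4)=0$.

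I expect the main obstacle to be the bookkeeping in the first step: confirming that $A^N$ versus $A^K$ differs by exactly the power $A^{p-3}$ in the right bidegree, including the weights. If the bidegrees did not match, I would instead redo the Toda bracket argument of Proposition \ref{p:longdiff} with $\beta_1^N\gamma$ in place of $\beta_1^K\gamma$. Since $N \geq K$, the vanishing $\beta_1^K\gamma = 0$ already forces $A^N\theta$ to be a boundary. I would then identify its source by the same MSS enumeration, this time in bidegree shifted by $A^{p-3}$, and check that it has length less than $pq+1$.
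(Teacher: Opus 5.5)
Your proposal is correct and is essentially the paper's proof. The paper's killing differential $d_{(p-2)q+1}(xacyBA^{p-3}C^{p-2}) \doteq aybBA^N$ is exactly your $\beta_1^{p-3}$-multiple of Proposition \ref{p:longdiff}, using $N = K + (p-3)$, and its ``either $aybBA^N$ is already zero or it is killed'' dichotomy is the same case split you make. Your nonvanishing check is not needed, since a zero class is trivially a boundary, and your fallback paragraph is unnecessary because the bidegrees do match.
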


\begin{proof}
The only alternative is $d_{pq+1}(v_4) \doteq aybBA^N$.

It follows from Proposition \ref{p:longdiff} that either $aybBA^N$ is already $0$ on the $((p-2)q+1)$-page of the ANSS or it is killed by a differential $d_{(p-2)q+1}(xacyBA^{p-3}C^{p-2}) \doteq aybBA^N$. In either case the class $aybBA^N$ disappears before the $(pq+1)$-page, so the result follows.

It is not logically necessary, but a similar argument as before shows that the only class in the ANSS that can hit $aybBA^N$ is $xacyBA^{p-3}C^{p-2}$, so we must have $d_{(p-2)q+1}(xacyBA^{p-3}C^{p-2}) \doteq aybBA^N$.
\end{proof}

This completes the proof of the first part of Theorem \ref{t:main}, since we have verified that $v_4$ is a permanent cycle and hence it represents a class $v_4 \in \pi_{2p^4-2} V(3)$. For the second part, we use that $V(3)$ is a ring spectrum to upgrade $v_4$ to a $V(3)$-self map as in Equation \ref{eq:upgradetoselfmap} and take the cofiber. This completes the proof.



\begin{thebibliography}{1}

\bibitem{Ma66}
J.~P. May.
\newblock The cohomology of restricted {L}ie algebras and of {H}opf algebras.
\newblock {\em J. Algebra}, 3:123--146, 1966.

\bibitem{OAI:V4}
{OpenAI}.
\newblock {A finite Smith--Toda complex $V(4)$ at the prime $1009$}.
\newblock OpenAI Math Release preprint
  \href{https://github.com/openai/math/blob/main/preprints/A-finite-Smith-Toda-complex-V4-at-the-prime-1009-September-23-2026/paper.pdf}{OAI:A-finite-Smith-Toda-complex-V4-at-the-prime-1009-September-23-2026},
  2026.

\bibitem{OAI:Vn}
{OpenAI}.
\newblock {Finite Smith--Toda Complexes at Varying Primes}.
\newblock OpenAI Math Release preprint
  \href{https://github.com/openai/math/blob/main/preprints/Finite-Smith-Toda-Complexes-at-Varying-Primes-September-23-2026/paper.pdf}{OAI:Finite-Smith-Toda-Complexes-at-Varying-Primes-September-23-2026},
  2026.

\bibitem{Ra04}
Douglas~C. Ravenel.
\newblock {\em Complex cobordism and stable homotopy groups of spheres}, volume
  347 of {\em AMS Chelsea Publishing}.
\newblock American Mathematical Society, Providence, RI, second edition, 2004.

\bibitem{To71}
Hirosi Toda.
\newblock On spectra realizing exterior parts of the {S}teenrod algebra.
\newblock {\em Topology}, 10:53--65, 1971.

\end{thebibliography}

\end{document}